\documentclass[11pt, a4paper]{article}
%
\usepackage[utf8]{inputenc}
\usepackage{footnote}
\usepackage{amsmath, amssymb, amsfonts, amsthm, mathtools, xcolor}
\usepackage{array}
\usepackage{systeme}
\usepackage{tikz}
\usepackage{caption}
\usepackage{subcaption}
\usepackage{hyperref}

\newcommand{\setbuilder}[2]{\left\{#1\ \colon #2\right\}}

 \def\r{1}
 \def\rr{1.3}

\newcommand\barc{} 
\def\barc(#1,#2,#3,#4){%
   \draw [teal,thick,domain=#1:#2] plot ({\r*cos(\x)}, {\r*sin(\x)});					
   \draw [black,thick,domain=#2:#3] plot ({\r*cos(\x)}, {\r*sin(\x)});				
   \draw [teal,thick,domain=#3:#4] plot ({\r*cos(\x)}, {\r*sin(\x)});					
 }
 \newcommand\barcc{} 
\def\barcc(#1,#2,#3,#4,#5,#6){%
   \draw [black,thick,domain=#1:#2] plot ({\r*cos(\x)}, {\r*sin(\x)});					
   \draw [gray,thick,domain=#2:#3] plot ({\r*cos(\x)}, {\r*sin(\x)});				
    \node at ({\rr*cos(#2)},{\rr*sin(#2)}) {#5};
    \node at ({\rr*cos(#3)},{\rr*sin(#3)}) {#6};
   \draw [black,thick,domain=#3:#4] plot ({\r*cos(\x)}, {\r*sin(\x)});					
 }
  \newcommand\cyclee{} 
\def\cyclee{%
   \draw [black,dotted,domain=0:360] plot ({\r*cos(\x)}, {\r*sin(\x)});
 }
  \newcommand\barccc{} 
\def\barccc(#1,#2,#3,#4,#5,#6){%
   \draw [black,thick,domain=#1:#2] plot ({\r*cos(\x)}, {\r*sin(\x)});					
   \draw [black,thick,domain=#2:#3] plot ({\r*cos(\x)}, {\r*sin(\x)});				
    \node at ({\rr*cos(#2)},{\rr*sin(#2)}) {#5};
    \node at ({\rr*cos(#3)},{\rr*sin(#3)}) {#6};
   \draw [black,thick,domain=#3:#4] plot ({\r*cos(\x)}, {\r*sin(\x)});					
 }	
\newcommand\conn{} 
\def\conn(#1,#2){%
   \draw [teal,thick] ({\r*cos(#1)}, {\r*sin(#1)}) -- ({\r*cos(#2)}, {\r*sin(#2)});		
 }
 \newcommand\connn{} 
\def\connn(#1,#2){%
   \draw [black,thick] ({\r*cos(#1)}, {\r*sin(#1)}) -- ({\r*cos(#2)}, {\r*sin(#2)});		
 }

\newtheorem{theorem}{Theorem}[section]

\newtheorem{lemma}[theorem]{Lemma}

\newtheorem{proposition}[theorem]{Proposition}
\newtheorem{conjecture}[theorem]{Conjecture}
\theoremstyle{definition}
\newtheorem{definition}[theorem]{Definition}
\newtheorem{remark}[theorem]{Remark}
\newtheorem{assumption}{Assumption}

\title{Low independence number and Hamiltonicity implies pancyclicity}
\author{Attila Dankovics \\ \href{mailto:a.j.dankovics@lse.ac.uk}{a.j.dankovics@lse.ac.uk}}

\begin{document}
\date{}
\maketitle

\begin{abstract}
A graph on $n$ vertices is called pancyclic if it contains a cycle of every length $3\le l \le n$. Given a Hamiltonian graph $G$ with independence number at most $k$ we are looking for the minimum number of vertices $f(k)$ that guarantees that $G$ is pancyclic. The problem of finding $f(k)$ was raised by Erdős in 1972 who showed that $f(k)\le 4k^4$, and conjectured that $f(k)=\Theta(k^2)$. Improving on a result of Lee and Sudakov we show that $f(k)=O(k^{11/5})$.
\end{abstract}

\section{Introduction}

A \emph{Hamilton cycle} of a graph is a cycle that passes through all vertices.  It is difficult to decide whether a graph contains a Hamilton cycle, therefore it is valuable to establish useful sufficient conditions for Hamiltonicity. The most well known sufficient condition is by Dirac\cite{dirac}, who showed that if each vertex of an $n$ vertex graph has at least $n/2$ neighbors, then the graph is Hamiltonian. A graph is \emph{pancyclic} if it contains a cycle of every length $3\le \ell\le n$, where $n$ denotes the number of vertices. By definition pancyclicity implies Hamiltonicity. Although the converse is not true, it is often the case that conditions that imply Hamiltonicity turn out to also imply pancyclicity. A famous meta conjecture of Bondy \cite{bondyMeta} states that almost all non-trivial sufficient conditions of Hamiltonicity also implies pancyclicity with the possible exception of a few graphs.

The independence number of a graph $G$ is the size of the largest stable  set, denoted by $\alpha(G)$. A famous result of Chvátal and Erdős \cite{chvatalErdos} states that if $\kappa(G)\ge\alpha(G)$, where $\kappa(G)$ is the vertex connectivity of $G$, then $G$ is Hamiltonian. Keevash and Sudakov \cite{KeevashSudakov} showed that the similar but stronger condition $\kappa(G)\ge c\alpha(G)$, where $c>1$ is a constant, is sufficient to conclude pancyclicity. 

In this paper we study a connection between Hamiltonicity, pancyclicity and independence number. Assuming $G$ is a Hamiltonian graph  with independence number at most $k$ we are looking for the minimum number of vertices $f(k)$ that guarantees that $G$ is pancyclic. The problem of finding $f(k)$ was raised by Erdős who showed that $f(k)\le 4k^4$ and conjectured that a stronger statement holds.

\begin{conjecture} [Erdős \cite{erdosConjecture}] There are constants $c_1$ and $c_2$ such that for all $k$ we have 
$c_1k^2\le f(k)\le c_2 k^2$. 
\end{conjecture}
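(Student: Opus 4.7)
The conjecture has two halves---a lower bound $f(k)\ge c_1 k^2$ and a matching upper bound $f(k)\le c_2 k^2$---that require rather different ideas.

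For the lower bound, the plan is to exhibit, for every $k$, an explicit Hamiltonian graph $G$ on $\Theta(k^2)$ vertices with $\alpha(G)\le k$ that misses at least one cycle length. I would search among ``circulant-like'' constructions: take a Hamilton cycle $v_1v_2\cdots v_n$ with $n=\Theta(k^2)$ and add a periodic chord pattern (say, all chords of lengths in a small set $D\subset\{2,\dots,n-2\}$) chosen so that (a) every sufficiently large vertex set contains an edge, forcing $\alpha(G)\le k$, yet (b) every cycle of $G$ has length in a proper subset of $\{3,\dots,n\}$, for instance because the cycle lengths are constrained modulo $|D|+1$. The main task is to balance these two competing requirements, and it is this balance that forces $n$ into the $\Theta(k^2)$ regime.

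For the upper bound, the plan is to suppose for contradiction that $G$ is Hamiltonian, $\alpha(G)\le k$, $n\ge Ck^2$, and $G$ misses a cycle of some length~$\ell$, and then derive a contradiction. Fix a Hamilton cycle $C=v_1v_2\cdots v_nv_1$. Each chord $v_iv_j$ produces two cycles whose lengths sum to $n+2$, so missing length~$\ell$ forbids all chords at two specific cyclic distances. The strategy is then (i) to use $\alpha(G)\le k$ to force the chord set to be large, (ii) to apply P\'osa-style rotations of $C$, producing new Hamilton cycles on the same vertex set and hence new forbidden distances, and (iii) to deduce that a controlled family of chord endpoints must be pairwise non-adjacent, yielding an independent set of size exceeding~$k$ unless $n=O(k^2)$.

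The main obstacle is precisely what this paper only partially surmounts: in step (ii) the successive forbidden-distance sets overlap heavily, so naive counting wastes a polynomial factor and yields only $O(k^3)$ in the Lee--Sudakov approach or $O(k^{11/5})$ here. Closing the gap to $O(k^2)$ seems to require either a stability argument showing that every near-extremal graph must closely resemble the lower-bound construction, or a genuinely new scheme for choosing the rotations that keeps the forbidden distances well spread. This is where I expect the real difficulty to lie.
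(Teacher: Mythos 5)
This is an open conjecture, not a theorem that the paper proves; the paper's contribution is the weaker upper bound $f(k)=O(k^{11/5})$, and you correctly sense this, but your write-up still reads as though a proof were within reach, so let me separate what is known from what is not.

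For the lower bound $f(k)=\Omega(k^2)$, a construction is already known and is recorded in the paper's introduction: take $k$ cliques $K_1,\dots,K_k$ each of size $k-2$, and join consecutive cliques (cyclically) by a perfect matching of independent edges. This graph is Hamiltonian, has $k(k-2)$ vertices and independence number $k$, yet has no cycle of length $k-1$, since any cycle either stays inside one clique (length $\le k-2$) or must traverse all $k$ connecting edges and hence has length $\ge 2k$. Your proposed circulant-with-small-chord-set construction is a genuinely different idea, but as stated it is only a search heuristic: you have not identified a chord set $D$ for which both $\alpha(G)\le k$ and a missing cycle length are verified, and the ``cycle lengths constrained modulo $|D|+1$'' claim does not hold for general circulants once one allows cycles that use chords of mixed lengths in both directions. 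You would need to actually pin down $D$ and prove both properties; until then this half is not established by your argument, whereas the clique construction settles it cleanly.

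For the upper bound $f(k)=O(k^2)$, there is a genuine gap and you name it yourself: the rotation/forbidden-distance scheme loses a polynomial factor, and no one currently knows how to recover it. The paper does not attempt to prove the conjecture; it proves $f(k)=O(k^{11/5})$ via a different route (reduction through a statement $SC1$ about nearly-Hamiltonian cycles through a prescribed small vertex set, then a combinatorial argument about arc-systems on the Hamilton cycle, semi-triangles, and an induction on an auxiliary independence parameter). Your sketch of steps (i)--(iii) is a reasonable description of the Lee--Sudakov-style starting point, but step (iii) as written -- ``deduce that a controlled family of chord endpoints must be pairwise non-adjacent'' -- is exactly the step that no one can carry out at the $O(k^2)$ scale. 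So this half is not a proof but a statement of the open problem. In short: the lower bound should be replaced by the known clique construction (or your circulant idea made fully rigorous), and the upper bound should be presented explicitly as open, with the paper's $O(k^{11/5})$ as the current record.
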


The following known construction due to Erdős provides a lower bound for this conjecture. Let $K_1,\dots,K_{k}$ be cliques of size $k-2$ and add an edge between successive cliques (including between the last and the first) such that these edges are independent. It is easy to check that this graph is Hamiltonian, has independence number $k$ and $k(k-2)$ vertices but does not contain a cycle of length $k-1$.

The result of Erdős was later improved by Keevash and Sudakov \cite{KeevashSudakov}, who showed that $f(k)\le 150k^3$ holds and by Lee and Sudakov \cite{LeeSudakov}, who proved that $f(k)=O(k^{7/3})$ holds.

Here we improve their results.

\begin{definition}
For $\beta>0$ let $SC1(\beta)$ denote the following statement. There exists $c>0$ such that given a Hamiltonian graph $G$ with $n\ge ck^{11/5}$ vertices and independence number at most $k$ and a subset of vertices $W$ with at most $20k^2$ vertices, we can find a cycle of length $n-1$ containing all the vertices from $W$.
\end{definition}

\begin{theorem} [Lee, Sudakov \cite{LeeSudakov}] \label{lsequiv}
For all $\beta\ge 2$, assuming $SC1(\beta)$ the following statement holds. There exists $c'>0$, such that if $G$ is a Hamiltonian graph with $n\ge c'k^{11/5}$ vertices and independence number at most $k$, then $G$ is pancyclic.
\end{theorem}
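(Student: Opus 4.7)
The plan is to split the cycle lengths $3\le\ell\le n$ into a long regime $\ell \ge L:=\lceil c k^{11/5}\rceil$ (where $c$ is the constant supplied by $SC1(\beta)$) and a short regime $\ell<L$, and handle each regime separately. For the long regime I would proceed by downward induction starting from the given Hamilton cycle of $G$. Assume inductively we have a cycle $C_m$ of length $m \ge L+1$ in $G$, and let $H:=G[V(C_m)]$ be the induced subgraph on the vertex set of $C_m$. Then $H$ has $m \ge ck^{11/5}$ vertices, is Hamiltonian with $C_m$ as a Hamilton cycle, and satisfies $\alpha(H)\le\alpha(G)\le k$ by monotonicity of the independence number under induced subgraphs. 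Therefore the hypothesis $SC1(\beta)$, applied to $H$ with $W=\emptyset$, produces a cycle of length $m-1$ in $H$ and hence in $G$. Iterating from $m=n$ down to $m=L+1$ yields a cycle of every length in $\{L,L+1,\ldots,n\}$.

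For the short regime $\ell<L$, the hypothesis $SC1(\beta)$ can no longer be iterated because the resulting graph would drop below the threshold $ck^{11/5}$. I would instead exhibit cycles of short length directly using the Hamilton cycle $v_1,\ldots,v_n$ of $G$. A chord $v_iv_j$ with gap $j-i\equiv\ell-1\pmod{n}$ together with the appropriate Hamilton arc produces a cycle of length exactly $\ell$. Because $n\gg k$ and any $k+1$ vertices in $G$ must contain an edge, a pigeonhole argument over chord candidates of each possible gap should force such chords to exist for all the short lengths we need.

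I expect the short regime to be the main obstacle. While producing \emph{many} chords is easy, guaranteeing a chord of every specific gap $\ell-1\in\{2,\ldots,L-2\}$ is more subtle. A natural approach is a cycle-extension lemma that turns a cycle of length $\ell'$ into one of length $\ell'+1$ by absorbing a single outside vertex using $\alpha(G)\le k$. The parameter $|W|\le 20k^{2}$ in the definition of $SC1(\beta)$ also hints at a bootstrap: once a short cycle of length up to $20k^{2}$ has been constructed directly, one can take its vertex set as $W$ and feed it into $SC1(\beta)$ to produce cycles of slightly longer lengths that still contain this short cycle, helping to bridge the range between the smallest directly-constructed cycles and the threshold $L$. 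Orchestrating these three pieces -- direct short cycle construction, the $W$-preserving bootstrap, and the long-regime iteration -- seamlessly across the full range $[3,L)$ is where I expect most of the technical difficulty to lie.
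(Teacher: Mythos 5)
The paper does not actually prove Theorem~\ref{lsequiv}; it cites Lee and Sudakov and asserts that their Proof of Theorem~1.1 goes through unchanged once their Theorem~2.1 is replaced by $SC1(\beta)$. So the thing to judge is whether your sketch would reproduce that argument. Your long-regime step is correct and is indeed the easy half: taking $H:=G[V(C_m)]$, which inherits $\alpha(H)\le k$, is Hamiltonian with $C_m$ as Hamilton cycle, and has $m\ge L+1>ck^{11/5}$ vertices, $SC1(\beta)$ with $W=\emptyset$ produces a cycle of length $m-1$, and iterating yields all lengths in $[L,n]$. No issues there.

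The short regime $3\le\ell<L$ is the actual content of the theorem, and you rightly flag it as the obstacle, but each of the three ideas you offer for it has a real gap. The pigeonhole claim that $\alpha(G)\le k$ and $n\gg k$ force a Hamilton-cycle chord of every gap in $\{2,\dots,L-2\}$ is false: the Erdős construction recalled in the introduction (a ring of $(k-2)$-cliques joined by independent edges) is Hamiltonian with independence number $k$ and has no cycle of length $k-1$ at all, so no chord argument of the kind you describe can produce every short length unconditionally. The ``absorb one outside vertex'' extension lemma is likewise not automatic from $\alpha\le k$; nothing guarantees an outside vertex with two consecutive neighbours on a given short cycle. Your bootstrap remark is closest to the real mechanism, but as stated it does not do what you want: feeding $V(C_0)$ in as $W$ and iterating $SC1(\beta)$ gives cycles of lengths $n-1,n-2,\dots,L$ all containing $W$, not cycles of length near $|C_0|$, and you leave unsaid how one bridges the range between $|C_0|$ and $L$. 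In the Lee--Sudakov argument the freedom to prescribe $W$ is used to keep a carefully chosen structure (including all low-degree vertices and a nested short-chord configuration along the Hamilton cycle) inside every cycle produced during the downward iteration, and separate structural lemmas then supply the remaining short and medium lengths. To turn your proposal into a proof you would need to reproduce those lemmas rather than appeal to pigeonhole or generic extension.
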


The above theorem is implicitly proved in \cite{LeeSudakov}. To see this one follows their \emph{Proof of Theorem 1.1}, this gives the stronger conclusion if their Theorem 2.1 is replaced by  $SC1(\beta)$.

\begin{theorem} \label{mainEquivalent}
	$SC1(11/5)$ holds.
\end{theorem}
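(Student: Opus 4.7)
\emph{Proof plan.} The goal is to produce a vertex $v_m \notin W$ such that $G - v_m$ has a Hamilton cycle, since any Hamilton cycle of $G - v_m$ is a cycle of length $n-1$ in $G$ through all vertices except $v_m$, and in particular through $W$. Using the given Hamilton cycle $C = v_1 v_2 \dots v_n$ of $G$, the deleted graph $G - v_m$ always contains the Hamilton path $P_m = v_{m+1} v_{m+2} \dots v_{m-1}$, so the task reduces to closing $P_m$ into a cycle of $G - v_m$ for some $m \notin W$.

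The first tool I would use is the local chord count. The simplest closing of $P_m$ is the chord $v_{m-1} v_{m+1}$, which bypasses $v_m$ and yields the desired cycle. Turán applied to $\overline G$ gives $|E(G)| \ge n^2/(2k) - O(n)$ from the hypothesis $\alpha(G) \le k$, so $C$ has many chords and averaging produces many length-two chords; as soon as the number of length-two chords with central vertex outside $W$ is positive, we are finished. This elementary mechanism already underlies the earlier $k^3$- and $k^{7/3}$-type bounds, and squeezing the exponent down to $11/5$ requires enlarging the catalogue of allowed closings. I would therefore also count short two-chord ``swaps'' that reroute a brief arc of $C$ around $v_m$, for example using edges $v_{m-2} v_{m+1}$ and $v_{m-1} v_{m+2}$, or the longer local configuration $v_{m-1} v_{m+3}$ together with $v_{m+1} v_{m+4}$ which also leaves the cycle minus $v_m$ intact.

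When all such short closings fail, my backup is a P\'osa rotation argument inside $G - v_m$. Fixing one endpoint of $P_m$ and rotating the other along available edges of $G - v_m$ yields a set $R_m$ of attainable second endpoints; the standard expansion bound $|N(R_m)| \le 2|R_m| + O(1)$, combined with $\alpha(G - v_m) \le k$, forces an edge inside $R_m$ whenever $|R_m| > k$, and any such edge closes the rotated Hamilton path into a Hamilton cycle of $G - v_m$. Failure at a given $m$ therefore pins down a rigid structural description of $C$ near $v_m$, producing a ``bad'' set $B_m$ of size $O(k)$ which obstructs both the short closings and the rotation.

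The main obstacle is the global bookkeeping. Summing the per-vertex obstructions over all $m \in V \setminus W$, together with the chord and swap counts from the first step and the budget $|W| \le 20 k^2$, must force an almost-independent set in $G$ of size larger than $k$, contradicting $\alpha(G) \le k$. The exponent $11/5$ should emerge as the precise threshold at which this weighted double count first becomes effective: the Lee--Sudakov argument separates the chord-count phase from the rotation phase and loses a factor at their interface, while the improvement to $11/5$ is expected to come from carrying out both phases inside a single unified estimate weighted by the $W$-budget. Calibrating the three quantities---chord/swap count, rotation-obstruction count, and $|W|$---to balance exactly at $k^{11/5}$ is where I expect the technical effort to concentrate.
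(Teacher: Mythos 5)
Your proposal takes a genuinely different route from the paper, but as written it has several concrete gaps that I do not see how to close, and I believe the approach falls short of the $11/5$ exponent.

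First, the reduction to ``find $v_m\notin W$ with $G-v_m$ Hamiltonian'' is correct and is implicitly the starting point in the paper too (via Assumption \ref{assumptions}). But your first tool is not sound as stated: Tur\'an does give $|E(G)|\gtrsim n^2/(2k)$, yet there is no averaging argument that converts total edge count into edges of the specific form $v_{m-1}v_{m+1}$. A Hamiltonian graph with $\alpha(G)\le k$ could a priori have no short chords at all, with all the surplus edges being long chords. The paper sidesteps this by never trying to count ``good'' chords directly; instead it packages the absence of certain cycle configurations (via Proposition \ref{contradictingcycle}) into the arc-system framework, and extracts independent sets from arcs.

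Second, the P\'osa rotation backup is also shaky: the claim that an edge inside the rotation set $R_m$ ``closes the rotated Hamilton path into a Hamilton cycle of $G-v_m$'' is not a standard consequence of rotation. Having two attainable endpoints that are adjacent does not in general produce a cycle through all vertices; what rotation controls is the neighborhood expansion of $R_m$, not edges within $R_m$. You would need a careful lemma here, and even then the conclusion $|R_m|\le k$ per bad vertex leads, after the ``global bookkeeping,'' to an estimate of the shape of the Keevash--Sudakov $k^3$ bound or the Lee--Sudakov $k^{7/3}$ bound rather than $k^{11/5}$. You acknowledge yourself that calibrating the double count to hit $11/5$ is the hard part, but that calibration is the theorem.

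The paper's actual proof is structurally quite different. It proceeds by contradiction under Assumption \ref{assumptions}, constructs a simple arc-system of size $\Theta(k^2)$ and length $\Theta(k^{1/5})$ (Proposition \ref{simpleArc}), and then proves the key induction Lemma \ref{mainInduction}: from $\Theta(x^{p(p-1)/2})$ arcs of length $\Theta(x)$ one extracts an independent set of size $x^p+1$. Applying this with $p=5$, $x=k^{1/5}$ contradicts $\alpha(G)\le k$. The gain from $7/3$ to $11/5$ comes precisely from the new Lemmas \ref{lemma41} (good arcs via a submodular expansion process) and \ref{lemma42} (semi-triangles extracted from good arcs and the inductive hypothesis), which have no analogue in your chord-counting/rotation plan. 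In short: your proposal, if its gaps were repaired, would most plausibly reprove an earlier, weaker bound, but it does not contain the mechanism that makes $11/5$ accessible.
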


The goal of the following sections is to prove Theorem \ref{mainEquivalent}. Using Theorem \ref{lsequiv} and Theorem \ref{mainEquivalent} we get the following immediate corollary. 

\begin{theorem} \label{mainTheorem}
There exists $c>0$, such that if $G$ is a Hamiltonian graph with $n\ge ck^{11/5}$ vertices and independence number at most $k$, then $G$ is pancyclic.
\end{theorem}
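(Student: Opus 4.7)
The plan is to start with a Hamilton cycle $C = v_1v_2\cdots v_n v_1$ of $G$ and modify it into a cycle of length $n-1$ that still contains every vertex of $W$. The easiest case is a \emph{direct skip}: an index $i$ with $v_i\notin W$ and $v_{i-1}v_{i+1}\in E(G)$. Deleting $v_i$ and inserting the chord $v_{i-1}v_{i+1}$ then produces the desired cycle. So my first move is to search for a direct skip; if one exists we are done. For the remainder of the argument assume the contrary: $\{v_{i-1},v_{i+1}\}$ is a non-edge of $G$ for every $v_i\notin W$.

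The set $W$ partitions $C$ into at most $|W|\le 20k^2$ arcs, and in every arc the selection of every other vertex gives a set whose consecutive elements are non-edges of $G$. Since $n\ge ck^{11/5}$ and there are at most $20k^2$ arcs, the average arc length is $\Omega(k^{1/5})$; more importantly, the total number of non-$W$ vertices is at least $n/2$, so averaging against the at-most-$k$ ``allowed'' even-index vertices per arc (beyond which $\alpha(G)\le k$ forces an edge) produces many chords of $C$ that lie inside a single arc: pairs $v_iv_j\in E(G)$ with $i,j$ in the same arc, $j-i$ even, and $j-i\ge 4$.

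The crux of the proof is to turn a pair of such chords into a length-$(n-1)$ cycle containing $W$. The target surgery is
\[
v_{s+1}v_{s+2}\cdots v_a\,v_{s-1}v_{s-2}\cdots v_{a+1}\,v_{s+1},
\]
which uses the two chords $v_{s-1}v_a,\,v_{s+1}v_{a+1}\in E(G)$, has length exactly $n-1$, and omits only $v_s$. To finish we need a pair $(s,a)$ with $v_s\notin W$, both chords present in $G$, and $a\notin\{s-2,\dots,s+2\}$ (the excluded values either repeat a vertex or would hide a forbidden direct skip at $v_{s\pm 2}$). I would establish existence of such a pair by a double count: for fixed $s$ the number of valid $a$ equals $|N(v_{s-1})\cap N^+(v_{s+1})|$, where $N^+$ denotes the cyclic forward shift along $C$ of the neighborhood; summing this over $s$ with $v_s\notin W$ and feeding in the chord-density estimate from the previous paragraph should give a positive total, hence a good triple.

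The most delicate step, and the source of the exponent $11/5$, is precisely this last balancing: the $W$-budget contributes a $k^2$ loss, while $\alpha(G)\le k$ delivers an $\Omega(k^{1/5})$ gain through the average long-arc chord density, and the two effects meet exactly at $n\sim k^{11/5}$. A secondary technical difficulty will be cleanly excluding the near-degenerate values of $a$ (where the surgery secretly collapses to a forbidden direct skip) and ensuring that the chord-pairs produced by the averaging can genuinely be combined in a single surgery rather than being useful only in isolation; I expect a short case analysis to handle this before the double count is invoked.
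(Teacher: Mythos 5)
Your overall framing (reduce to finding an $(n-1)$-cycle through $W$, derive structure from the absence of direct skips, and repair by a two-chord surgery) is in the right spirit, but there are two concrete gaps that the paper's machinery exists precisely to overcome.

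\textbf{Gap 1: individual arcs are too short to contain chords.} The set $W$ has up to $20k^2$ vertices, so even with $n\ge ck^{11/5}$ a typical arc has only $\Theta(k^{1/5})$ vertices, hence only $\Theta(k^{1/5})$ even-indexed vertices. An independent set of size $\Theta(k^{1/5})$ is perfectly consistent with $\alpha(G)\le k$; no edge is forced inside a single arc. (You would need arc length $\gg k$ to force a chord by Tur\'an/Caro--Wei counting, and $\gg k$ is far larger than the available $k^{1/5}$.) So the claim that ``averaging ... produces many chords of $C$ that lie inside a single arc'' is false. The low-independence hypothesis only bites when you look at \emph{many arcs at once}: a union of $\Theta(k^2)$ arcs of length $\Theta(k^{1/5})$ contains $\Theta(k^{11/5})$ vertices and therefore must have edges, but those edges go \emph{between} arcs, not within one. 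This is exactly why the paper introduces arc-systems, the arc-graph, and the $M_2$-free ``simple'' condition, rather than working arc by arc.

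\textbf{Gap 2: the double count for parallel chord pairs does not close.} Your surgery needs a very rigid local structure: two chords of the form $v_{s-1}v_a$ and $v_{s+1}v_{a+1}$, i.e.\ an element of $N(v_{s-1})\cap N^{+}(v_{s+1})$. Even if every non-problematic vertex had degree on the order of the maximum that $\alpha\le k$ guarantees cheaply (about $2k$), you are intersecting two sets of size $\Theta(k)$ inside a universe of size $\Theta(k^{11/5})$; the expected size of the intersection is $\Theta(k^2/k^{11/5})=\Theta(k^{-1/5})$, which vanishes. There is no reason the sum over $s$ is positive, and no independence-number argument that makes it so. The paper instead looks for a much weaker and more flexible structure --- a \emph{semi-triangle} among three arcs, with the case split between Type~1 (usable to continue an induction) and Type~2 (immediately gives a contradicting cycle) --- and shows its existence not by a one-step double count but by an induction over $p=1,\dots,5$ on the size of independent sets extractable from arc-systems (Lemma~\ref{mainInduction}, driven by Lemmas~\ref{lemma41} and~\ref{lemma42}). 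That iterative growth is also where $11/5$ really comes from: $p=5$ levels of induction, arcs of length $x=k^{1/5}$, and $x^{p(p-1)/2}=x^{10}=k^2$ arcs, matching the $k^2$ budget from $|W|$.

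In short, the plan is not a reorganisation of the paper's proof but an argument that breaks at both of its load-bearing steps; to repair it you would be forced to aggregate arcs and search for cross-arc structure, which is essentially where the paper's arc-system and semi-triangle framework begins.
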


The goal of the following sections is to prove Theorem \ref{mainEquivalent}.

For a proof of Theorem \ref{mainEquivalent} we substantially extend the methods of \cite{LeeSudakov}. The improvement comes from Lemma \ref{lemma41} and the inductive approach to proving Lemma \ref{mainInduction} which is made possible by Lemma \ref{lemma42}. Proving these new lemmas constitutes most of Section \ref{mainSection}. Before that, we will state some definitions and prove a basic structural proposition in Section

\section{Definitions, earlier results} \label{sec:}

The goal of this section is to state the basic definitions and to prove Proposition \ref{simpleArc} which states the existence of a structure we will use in Section \ref{mainSection}.

We make no attempt to find the optimal value of $c$ in Theorem \ref{mainTheorem}. For this reason we can ignore small rounding errors and thus will omit all floor and ceiling signs. We fix a large constant $c$, how large we actually need will come from later calculations so we do not specify at this point.

\begin{assumption} \label{assumptions}
From this point we assume for a contradiction to Theorem \ref{mainEquivalent} the following;
\begin{itemize}
\item $G$ is a Hamiltonian graph with $n\ge ck^{11/5}$ vertices,
\item $G$ has independence number at most $k$,
\item $W$ is a subset of $V(G)$ with at most $20k^2$ vertices,
\item $G$ has no cycle of length $n-1$ containing $W$,
\item $H$ is a Hamilton cycle in $G$.
\end{itemize}
\end{assumption}

We call a vertex of $G$ \emph{problematic} if it has degree at most $2k$ or is an element of $W$. There are at most $2k^2$ vertices with degree at most $2k$ (by the greedy algorithm for finding independent sets), so there are at most $22k^2$ problematic vertices. The motivation for calling these vertices problematic comes from Proposition \ref{contradictingcycle}.

We call a cycle $C$ a \emph{contradicting cycle} if it has length $n-k\le |C|\le n-1$ and contains all problematic vertices.

The following proposition shows that in the graphs we are considering no contradicting cycle exists, thus justifying their name.

\begin{proposition} [\cite{LeeSudakov} Proposition 3.1] \label{contradictingcycle}
	If $G$ satisfies Assumption \ref{assumptions}, then there is no contradicting cycle in $G$.
\end{proposition}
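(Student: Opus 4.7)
The plan is to argue by contradiction, assuming a contradicting cycle $C$ of length $\ell$ with $n-k\le\ell\le n-1$ exists, and showing how to reach a cycle of length exactly $n-1$ containing $W$. If $\ell=n-1$ there is nothing to do: since $W$ is contained in the set of problematic vertices and $C$ by definition contains every problematic vertex, $C$ itself violates Assumption \ref{assumptions}. So I will assume $\ell\le n-2$ and iteratively extend $C$ by one vertex at each step, preserving the property of containing all problematic vertices, until the length reaches $n-1$.

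For the extension step, let $S=V(G)\setminus V(C)$, so $1\le |S|\le k$, and note that no vertex of $S$ is problematic (all problematic vertices lie on $C$). Pick any $s\in S$. Then $s\notin W$ and $\deg(s)>2k$, while $|N_S(s)|\le |S|-1\le k-1$, so $s$ has at least $k+2$ neighbours on $C$. Fix an orientation of $C$, let $A=N_C(s)$ and write $v^+$ for the successor of $v\in V(C)$; set $A^+=\{v^+:v\in A\}$. If $A\cap A^+\neq\emptyset$ then $s$ has two consecutive neighbours $v_i,v_{i+1}$ on $C$, and inserting $s$ between them yields a cycle $C'$ of length $\ell+1$ whose vertex set is $V(C)\cup\{s\}$. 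Otherwise $A$ and $A^+$ are disjoint, so $\{s\}\cup A^+$ has at least $k+3$ vertices; since $\alpha(G)\le k$ it spans an edge, and since $s$ itself has no neighbour in $A^+$ in this case, that edge must be of the form $u^+v^+$ for some $u,v\in A$. I then perform the standard Chv\'atal--Erd\H os rotation: taking (without loss of generality) $u$ before $v$ along $C$, the cycle
\[
s,\ v,\ v^-,\ \ldots,\ u^+,\ v^+,\ v^{++},\ \ldots,\ u,\ s
\]
has length $\ell+1$ and again vertex set $V(C)\cup\{s\}$.

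In both cases $C'$ contains every vertex of $C$ together with $s$, so it still contains all problematic vertices, and $|C'|=\ell+1\le n-1$. Iterating this extension at most $k-1$ times (the bound on $|N_C(s)|$ only improves as $|S|$ shrinks) produces a cycle of length exactly $n-1$ containing $W$, contradicting Assumption \ref{assumptions}. The only non-routine step is the rotation in the second case — verifying that the rerouted sequence traverses each vertex of $V(C)\cup\{s\}$ exactly once and uses only edges of $G$ — but this is the familiar successor argument and presents no real obstacle; the rest of the proof is bookkeeping about degrees and independence.
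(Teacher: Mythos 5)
Your proof is correct and uses the standard rotation--extension argument, which is the same approach as the proof in Lee--Sudakov's Proposition 3.1 that the paper cites (the paper itself does not reprove it). The key points — that any vertex off a contradicting cycle is non-problematic and hence has at least $k+2$ neighbours on the cycle, that $A \cap A^+ = \emptyset$ forces an edge inside $\{s\} \cup A^+$ by the independence bound, and that this edge yields the usual rotation extending the cycle by exactly one vertex while keeping all problematic vertices — are all verified cleanly, and the iteration terminates at length $n-1$ yielding the contradiction.
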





We say two vertices of $G$ are \emph{consecutive} if they are neighbors in $H$. A set of vertices is \emph{continuous} if they form a path in $H$. For a subset $A\subset V(G)$ the \emph{continuous closure} of the set is $\overline{A}$ the minimum sized continuous set that contains it (in general this might not always be unique, but we will use it only in cases when it is).

Next we define arc-systems, which are the objects that we will primarily use in the rest of the paper. 

\begin{definition}
A family of subsets of $V(G)$ (where the graph $G$ has a fixed Hamilton cycle $H$) denoted by $\mathcal{A}$ is called an \emph{arc-system} and its elements \emph{arcs} if the following hold.
\begin{itemize}
    \item For all $A\neq B \in \mathcal{A}$, we have $\overline{A}\cap\overline{B}=\emptyset$, that is, the continuous closure of arcs are pairwise disjoint.
    \item For all $A\in\mathcal{A}$, we have $\overline{A}\cap W=\emptyset$, that is, the continuous closure of each arc has no problematic vertex in it.
    \item For all $A\in\mathcal{A}$, no two vertices in $A$ are consecutive.
\end{itemize}
\end{definition}

\begin{remark} \label{remIndep}
If $A$ is an arc and $|\overline{A}|\le k+2$ then $A$ is an independent set. Indeed, if there was an edge $\{u,v\}$ where $u,v\in A$ then using the longer path between $u$ and $v$ in $H$ and the edge $\{u,v\}$ we would get a contradicting cycle (see Figure \ref{fig:onearc}).
\end{remark}

\begin{definition}
We call an arc system $\mathcal{A}$ \emph{independent} if it has the property that for all $A$ in $\mathcal{A}$ we have $|\overline{A}|\le k$.
\end{definition}

We say the \emph{size} of the arc-system is $|\mathcal{A}|$ and the \emph{length} of the arc system is $\min_{A\in\mathcal{A}} |A|$.

\begin{proposition}
Given $c_1$ and $c_2$, there exists $c$ such that if we assume Assumption  \ref{assumptions} then there is an independent arc-system in the graph $G$ of size $c_1k^2$ and length $c_2k^{1/5}$.
\end{proposition}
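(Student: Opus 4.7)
The plan is a purely constructive packing argument: slice the Hamilton cycle $H$ into short, disjoint blocks that avoid problematic vertices, then take every other vertex of each block as an arc.

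First I would delete all problematic vertices from $H$. Since there are at most $22k^2$ of them, this breaks the remaining vertex set into at most $22k^2$ maximal continuous sub-arcs $S_1,\dots,S_m$ of $H$ (call them runs), of total length $\sum_i |S_i|\ge n-22k^2\ge ck^{11/5}-22k^2$. Fix the block length $\ell = 2c_2 k^{1/5}$; for $k$ large enough relative to $c_2$ we have $\ell\le k$, and the finitely many small values of $k$ can be handled by enlarging $c$ further. Within each run $S_i$ I carve off $\lfloor |S_i|/\ell\rfloor$ consecutive disjoint blocks of length exactly $\ell$, leaving fewer than $\ell$ leftover vertices at the tail. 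From each block I form an arc $A$ by selecting every other vertex, giving exactly $c_2 k^{1/5}$ pairwise non-consecutive vertices whose continuous closure $\overline{A}$ is a sub-interval of that block. Every arc-system axiom is then immediate: distinct blocks are disjoint sub-intervals of $H$ sitting inside runs, so the closures are pairwise disjoint and avoid all problematic vertices (hence $W$); no two vertices in an arc are consecutive on $H$ by the alternating choice; and $|\overline{A}|\le \ell\le k$, so the system is independent and of length $c_2 k^{1/5}$.

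It remains to count the arcs. Their number is at least
\[
\sum_{i=1}^m \left\lfloor\frac{|S_i|}{\ell}\right\rfloor \;\ge\; \frac{n-22k^2}{\ell}-m \;\ge\; \frac{c}{2c_2}k^2-\frac{11}{c_2}k^{9/5}-22k^2,
\]
which exceeds $c_1 k^2$ once $c$ is taken sufficiently large in terms of $c_1$ and $c_2$ (and $k$ is large, which follows from the hypothesis on $n$). The only potential obstacle is that three sources of waste---the $22k^2$ problematic vertices, the leftover tails of length less than $\ell$ in each of the at most $22k^2$ runs, and the factor-of-two loss from taking alternate vertices---must simultaneously be absorbed by the $n\ge ck^{11/5}$ budget. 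The dominant waste is $m\cdot\ell = O(c_2 k^{11/5})$, so choosing $c$ large compared to $c_1 c_2$ swallows it. There is no deep structural subtlety here; the statement is a pigeonhole packing calibrated precisely to the $k^{11/5}$ exponent driving the paper.
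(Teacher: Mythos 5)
Your proposal is correct and is essentially identical to the paper's proof: both delete the at most $22k^2$ problematic vertices from $H$, split the remainder into at most $22k^2$ paths, greedily chop each path into blocks of length $2c_2k^{1/5}$, take every other vertex of a block as an arc, and conclude with the same waste-vs-budget count. The only cosmetic difference is that the paper phrases the block-chopping as a while-loop process and you phrase it as a direct partition, but the construction and the arithmetic are the same.
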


\begin{proof}
We start with the empty arc-system. Removing the problematic vertices from $H$, we obtain a set $\mathcal{P}$ of at most $22k^2$ paths. From this set we will construct an arc-system $\mathcal{A}$ with the desired properties.

While there is a path $\{v_1,v_2,\dots,v_m\}=P\in\mathcal{P}$ such that $m\ge 2c_2k^{1/5}$ we do the following. Remove $P$ from $\mathcal{P}$. Add $\{v_{2c_2k^{1/5}+1},v_{2c_2k^{1/5}+2},\dots,v_m\}$ to $\mathcal{P}$. Add $\{v_1,v_3,\dots,v_{2c_2k^{1/5}-1}\}$ to $\mathcal{A}$. In words, we remove the first $2c_2k^{1/5}$ vertices of $P$ and form an arc from every second vertex in it, and add that arc to 
$\mathcal{A}$.

At the end of this process we have at most $(22k^2)(2c_2k^{1/5})$ leftover vertices (from paths shorter than $2c_2k^{1/5}$), we removed $22k^2$ problematic vertices at the start, and the half of the other vertices were used to form arcs in $\mathcal{A}$. So we have at least $\frac{ck^{11/5}-(22k^2)(2c_2k^{1/5}+1)}{2c_2k^{1/5}}$ arcs in $\mathcal{A}$, which is more than $c_1k^2$ if $c$ is large enough.
\end{proof}

Fix one such arc-system $\mathcal{A}$. Next we get rid of matchings of size 2 between arcs.

\begin{definition}
We say a graph is \emph{$M_2$-free} if it doesn't have two independent edges (or equivalently, there is a vertex that is incident to all edges).
\end{definition}

\begin{definition}
We say an arc-system is \emph{simple} if it is independent and for each pair of arcs, the subgraph of $G$ induced by them is $M_2$-free. 
\end{definition}

We draw the vertices of the graph $G$ on a circle in the plane in the order of the cycle $H$ and connect neighboring vertices with line segments. If there are two independent edges $\{x_1,y_1\}$, $\{x_2,y_2\}$ between arcs $A_1$ and $A_2$ then these can be intersecting on this drawing or not. 

If they are intersecting then we immediately find a contradicting cycle the following way. From $H$ remove the edges of the shorter path between $x_1,x_2$ and, similarly, remove the edges of the shorter path between $y_1,y_2$ and instead add the edges $\{x_1,y_1\}$ and $\{x_2,y_2\}$ (see Figure \ref{fig:intersectingM2}). This gives a cycle with at least $n-2c_2k^{1/5}>n-k$ vertices. By Proposition \ref{contradictingcycle} this is a contradiction to Assumption \ref{assumptions}.

\captionsetup[subfigure]{format=hang}
\begin{figure} 
  \begin{subfigure}{.3\textwidth}
    \begin{tikzpicture}[scale=1]
        \cyclee
        \barcc(220,240,300,320,$u$,$v$)
        \connn(240,300)
    \end{tikzpicture}
    \centering
    \caption{An arc with an edge inside}
    \label{fig:onearc}
  \end{subfigure}
  \quad
  \begin{subfigure}{.3\textwidth}
    \begin{tikzpicture}[scale=1]
        \cyclee	
        \barcc(235,250,290,305,$x_1$,$x_2$)
        \barcc(10,25,65,80,$y_1$,$y_2$)
        \connn(25,250)
        \connn(65,290)
	\end{tikzpicture}
    \centering
    \caption{An intersecting\\$M_2$ between arcs}
    \label{fig:intersectingM2}
  \end{subfigure}
  \quad
  \begin{subfigure}{.3\textwidth}
    \begin{tikzpicture}[scale=1]
        \cyclee
        \barcc(250,260,280,290,$x_1$,$x_2$)
        \barcc(40,50,70,80,$y_2$,$y_1$)
        \barcc(150,160,180,190,$y_4$,$y_3$)
        \barcc(330,340,360,370,$x_3$,$x_4$)
        \connn(70,260)
        \connn(50,280)
        \connn(160,360)
        \connn(180,340)
    \end{tikzpicture}
    \centering
    \caption{Two non-intersecting $M_2$ between arcs}
    \label{fig:doubleM2}
  \end{subfigure}
  \centering
  \caption{Contradicting cycles implied by edge configurations}
\end{figure}
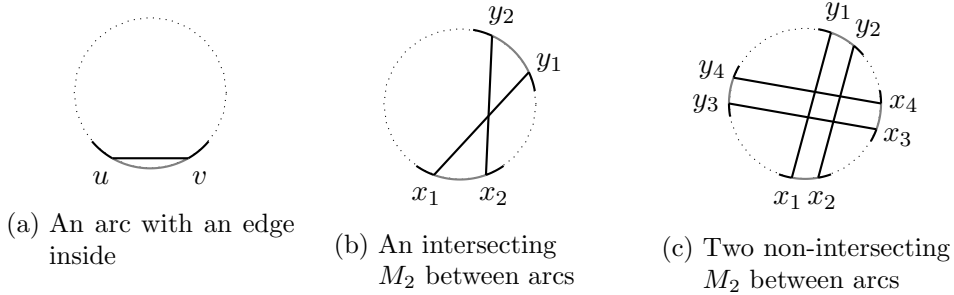

If we find two pairs of arcs each with non-intersecting $M_2$ such that these two $M_2$ ($\{x_1,y_1\}$, $\{x_2,y_2\}$ between $A_1$ and $A_2$ and $\{x_3,y_3\}$, $\{x_4,y_4\}$ between $A_3$ and $A_4$) intersect each other on the drawing, then again we can find a contradicting cycle the following way. From $H$ remove the edges of the shorter path between $x_1,x_2$; $y_1,y_2$; $x_3,x_4$; $y_3,y_4$ and instead add the edges $\{x_1,y_1\}$, $\{x_2,y_2\}$, $\{x_3,y_3\}$, $\{x_4,y_4\}$ (see Figure \ref{fig:doubleM2}). This gives a cycle with at least $n-4c_2k^{1/5}>n-k$ vertices. By Proposition \ref{contradictingcycle} this is a contradiction to Assumption \ref{assumptions}.

This implies that if we look at the graph where the vertices are the arcs and the edges are $M_2$ subgraphs between them, then this is a planar graph. That implies 5 colourability. By taking the majority colour we get a simple arc-system with size $\frac{c_1}{5}k^2$ and length $c_2k^{1/5}$.

This leads to the following proposition:

\begin{proposition} \label{simpleArc}
Given $c_1$ and $c_2$, there exists $c$ such that if we assume Assumption \ref{assumptions} then in the graph $G$ there is a simple arc-system of size $c_1k^2$ and length $c_2k^{1/5}$. \qed
\end{proposition}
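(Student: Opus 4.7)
The plan is to first apply the preceding proposition with parameters $5c_1$ and $c_2$ to obtain an independent arc-system $\mathcal{A}_0$ of size $5c_1 k^2$ and length $c_2 k^{1/5}$, and then extract a large simple sub-arc-system by discarding arcs that participate in too many $M_2$ configurations. The main tool for ruling out bad configurations will be Proposition \ref{contradictingcycle}: any cycle of length between $n-k$ and $n-1$ that contains all problematic vertices cannot exist, so we will build such cycles by local rerouting whenever a forbidden configuration occurs.

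The first step is to show that for any two arcs $A_1, A_2 \in \mathcal{A}_0$ with a matching of size two between them, say $\{x_1,y_1\}$ and $\{x_2,y_2\}$ with $x_i \in A_i$ and $y_i \in A_i$ appropriately, the two edges must be non-crossing when drawn as chords on the circular layout of $H$. If they cross, then deleting the two shorter $H$-paths connecting $x_1$ to $x_2$ and $y_1$ to $y_2$ and adding the two matching edges produces a single cycle; this cycle contains all problematic vertices since the continuous closure of each arc is disjoint from $W$, and its length is at least $n - 2 c_2 k^{1/5}$, which exceeds $n-k$ for $c$ (and hence $n$) large enough relative to $c_2$. This contradicts Proposition \ref{contradictingcycle}.

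The second step is to form the auxiliary graph $\Gamma$ whose vertex set is $\mathcal{A}_0$ and in which two arcs are adjacent whenever the induced bipartite subgraph contains an $M_2$, and to prove that $\Gamma$ is planar. Drawing each edge of $\Gamma$ as a chord joining (points inside) its two arcs on the circular layout, I claim no two chords cross: if the edges $A_1A_2$ and $A_3A_4$ of $\Gamma$ are realised by non-crossing matchings $\{x_1,y_1\},\{x_2,y_2\}$ and $\{x_3,y_3\},\{x_4,y_4\}$ whose chord-pairs cross each other, then deleting the four short $H$-paths between the pairs $(x_1,x_2)$, $(y_1,y_2)$, $(x_3,x_4)$, $(y_3,y_4)$ and adding the four matching edges again produces a single cycle containing all problematic vertices, of length at least $n - 4 c_2 k^{1/5} > n - k$, contradicting Proposition \ref{contradictingcycle}.

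Planarity of $\Gamma$ implies $5$-colourability, so some colour class is an independent set in $\Gamma$ of size at least $c_1 k^2$; this subfamily of $\mathcal{A}_0$ is the desired simple arc-system, since independence in $\Gamma$ means exactly that no pair of its arcs spans an $M_2$. The only quantitative content is the inequality $4 c_2 k^{1/5} < k$, which holds for $k$ sufficiently large and is absorbed into the choice of $c$. The most delicate point of the argument is verifying that each rerouting operation really produces a single cycle (rather than two disjoint cycles) and that all problematic vertices survive; both facts follow because the continuous closures of arcs are pairwise disjoint from $W$ and from each other, so the removed $H$-paths are short segments internal to arc-closures and the cyclic gluing is forced by the geometry of the chords on the circle.
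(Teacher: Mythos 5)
Your proposal is correct and follows the paper's argument essentially step for step: start from the independent arc-system of the preceding proposition (rescaled by a factor of 5), use Proposition \ref{contradictingcycle} twice via local rerouting of $H$ to forbid first a crossing $M_2$ and then two mutually crossing $M_2$'s, deduce planarity and hence $5$-colourability of the auxiliary $M_2$-graph, and keep the largest colour class. The only blemish is the slip ``$x_i\in A_i$ and $y_i\in A_i$'' (you presumably meant $x_i\in A_1$, $y_i\in A_2$), and like the paper you assert rather than verify that each rerouting yields a single cycle, but these do not affect the correctness of the approach.
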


In some cases we want to consider an auxiliary graph, in which the arcs are the vertices.

\begin{definition}
The \emph{arc-graph} of an arc-system $\mathcal{A}$ is the graph $G_{\mathcal{A}}$ where the vertices are the arcs in the system and there is an edge between two arcs in the arc-graph if and only if there is an edge between the two arcs in $G$. We call the edges of the arc-graph \emph{arc-edges}.
\end{definition}

\section{Key lemmas, proof of the main theorem}
\label{mainSection}

The goal of this section is to prove Lemma \ref{mainInduction} which will imply Theorem \ref{mainTheorem}. To prove it, we will use induction and two structural lemmas \ref{lemma41}, \ref{lemma42}.

Given an arc-system $\mathcal{A}$, let $G[\mathcal{A}]$ denote the subgraph induced by all vertices in the arcs of $\mathcal{A}$.

\begin{lemma} \label{lemma34}
Given a simple arc-system $\mathcal{A}$ with length $a$ and size $b$ and $m$ arc-edges in the corresponding arc-graph, there is an independent set in $G[\mathcal{A}]$ of size $ab-m$.
\end{lemma}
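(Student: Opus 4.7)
The plan is a direct counting argument: since $\mathcal{A}$ has size $b$ and length $a$, every arc contains at least $a$ vertices and there are $b$ arcs, so $|V(G[\mathcal{A}])|\ge ab$. Thus it suffices to exhibit a set of at most $m$ vertices whose deletion kills every edge of $G[\mathcal{A}]$. I exploit the two defining features of a simple arc-system. First, by independence every arc $A\in\mathcal{A}$ satisfies $|\overline{A}|\le k\le k+2$, so Remark \ref{remIndep} guarantees that $A$ is itself an independent set in $G$. Second, by simplicity, for every arc-edge $\{A,B\}$ of the arc-graph the bipartite subgraph of $G$ induced by $A\cup B$ is $M_2$-free and therefore has a vertex cover of size $1$.

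Next I would iterate over the $m$ arc-edges and, for each one $\{A,B\}$, delete a single vertex $v_{AB}\in A\cup B$ that meets every edge between $A$ and $B$. This removes at most $m$ vertices in total (possibly fewer, if some $v_{AB}$ was already chosen for another arc-edge). Let $S$ be the set of remaining vertices of $G[\mathcal{A}]$; then $|S|\ge ab-m$.

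Finally I would check that $S$ is independent. Any purported edge of $G[S]$ lies either inside a single arc, which is impossible because each arc is an independent set, or between two distinct arcs $A,B$, in which case $\{A,B\}$ must be an arc-edge. But then the cover vertex $v_{AB}$, which is incident to every edge between $A$ and $B$, was deleted, contradicting that both endpoints of the edge lie in $S$. Hence $S$ is the desired independent set of size at least $ab-m$. There is no real obstacle here; the lemma merely unpacks the observation that the definition of \emph{simple} was tailored precisely so that $M_2$-freeness yields a one-vertex cover for each arc-edge.
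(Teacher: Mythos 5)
Your proof is correct and takes essentially the same approach as the paper: use simplicity to find a one-vertex cover for each arc-edge, delete these at most $m$ vertices, and observe that what remains is independent because each arc is itself independent. You are slightly more explicit than the paper in invoking Remark~\ref{remIndep} for the within-arc independence, which is a welcome clarification of a step the paper leaves implicit.
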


\begin{proof}
Since the arc-system is simple, the edges of $G[\mathcal{A}]$ corresponding to a single arc-edge $e$ can be covered by a single vertex of $G[\mathcal{A}]$ (that is, the is a vertex $v$ in $G[\mathcal{A}]$ such that all the edges corresponding to $e$ are incident to $v$). Removing these vertices we get an independent vertex set in $G[\mathcal{A}]$ of size $ab-m$.
\end{proof}

Now we define the function that we want to work with.

\begin{definition}
Let $g(a,b)$ denote the largest number such that given a simple arc-system $\mathcal{A}$ of length $a$ and size $b$ there is always an independent set of size $g(a,b)$ in $G[\mathcal{A}]$.
\end{definition}

\begin{remark} \label{apbp}
For all $p\in \mathbb{N}$ we define the following constants that we will use in the following section;
$a_p=10\cdot 3^p$, $b_p=1000^p\cdot 4^{p^2}$.
\end{remark}

The lemma that we want to prove in this section is the following.

\begin{lemma} \label{mainInduction}
Using the constants from Remark \ref{apbp}, for every $p\in\mathbb{N}$ and for every $x$ with $a_px\le k$ we have $$g\left(a_px,b_px^{p(p-1)/2}\right)\ge x^p+1\,.$$
\end{lemma}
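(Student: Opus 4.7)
The plan is to prove Lemma~\ref{mainInduction} by induction on $p$. The base case (whether $p=0$ or $p=1$, depending on the convention for $\mathbb{N}$) is immediate: any single arc of the arc-system has length $a_p x$ and, by Remark~\ref{remIndep} combined with the fact that a simple arc-system is independent and $a_p x\le k$, its vertices form an independent set of size $a_p x\ge x^p+1$.

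For the inductive step, assume the statement for $p-1$ and let $\mathcal{A}$ be a simple arc-system of length $a_p x=3a_{p-1}x$ and size $b_p x^{p(p-1)/2}$ with $a_p x\le k$. First, partition each arc of $\mathcal{A}$ into three consecutive sub-arcs of length $a_{p-1}x$; this produces a new simple arc-system $\mathcal{A}'$ of length $a_{p-1}x$ and size $3b_p x^{p(p-1)/2}$. Since $a_{p-1}x\le a_p x\le k$, the inductive hypothesis can be invoked with $y=x$ on any sub-arc-system of $\mathcal{A}'$ of size $b_{p-1}x^{(p-1)(p-2)/2}$, yielding an independent set of size $x^{p-1}+1$. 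A single application is too small for the target $x^p+1$, so the plan is to extract $x$ sub-arc-systems $\mathcal{A}_1,\ldots,\mathcal{A}_x\subseteq\mathcal{A}'$, each of the required size, apply the hypothesis to each to obtain independent sets $I_1,\ldots,I_x$ of size $x^{p-1}+1$ apiece, and merge them into a single independent set $I$ in $G$. If the merge globally discards at most $x-1$ vertices, then $|I|\ge x(x^{p-1}+1)-(x-1)=x^p+1$. The room to do so is generous: the ratio between the $3b_p x^{p(p-1)/2}$ sub-arcs available and the $x\cdot b_{p-1}x^{(p-1)(p-2)/2}$ required for the $x$ sub-systems is
$$\frac{3b_p}{b_{p-1}}\,x^{p-2}=3000\cdot 4^{2p-1}\cdot x^{p-2}\,.$$

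The two new structural lemmas play the following roles. Lemma~\ref{lemma42} furnishes the mechanism for extracting the $x$ sub-arc-systems so that the arc-edges of $G$ between distinct $\mathcal{A}_i$'s are controlled, which is what allows the independent sets produced by induction to be glued together. Lemma~\ref{lemma41} is then used to bound the number of vertices that must be removed when merging, yielding the $x-1$ budget mentioned above. The main obstacle is precisely this merging step: a vertex of $I_i$ may be adjacent in $G$ to a vertex of $I_j$ for some $j\neq i$, and the argument must show that, summed over all pairs of sub-systems, the total conflict can be reduced to at most $x-1$ vertices. The bulk of the technical work behind Lemma~\ref{mainInduction} is therefore contained in the statements and proofs of Lemmas~\ref{lemma41} and~\ref{lemma42}, and the constants $a_p$, $b_p$ are engineered so that the slack above exactly absorbs the losses at each inductive step.
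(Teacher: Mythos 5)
Your plan shares one surface feature with the paper's proof — the idea of using the $p-1$ case roughly $x$ times to reach $x^p$ — but the central step of your argument, the merge, is both unjustified and not what the paper does, and you have misassigned the roles of Lemmas~\ref{lemma41} and~\ref{lemma42}.

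The gap is the claim that the merge ``globally discards at most $x-1$ vertices.'' If $I_i$ and $I_j$ are independent sets living inside different sub-arc-systems, nothing prevents $G$ from having a dense (even complete) bipartite graph between them; a simple arc-system only restricts matchings of size $2$ \emph{between two arcs}, not between two families of arcs. No choice of sub-arc-systems $\mathcal{A}_1,\dots,\mathcal{A}_x$ controls this, and neither lemma is designed to: Lemma~\ref{lemma41} produces either an independent set of size $x^p+1$ directly or a subsystem $\mathcal{A}'$ in which every arc is \emph{good} (admits a Hall-type assignment of many neighboring arcs to half its vertices), and Lemma~\ref{lemma42} takes such a good arc and produces either an independent set of size $x^p+1$ or a \emph{Type~1 semi-triangle}. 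Neither says anything about limiting the conflicts between independent sets to $x-1$ vertices; there is no ``budget'' to absorb.

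What the paper actually does is entirely different in mechanism. Inside the proof of Lemma~\ref{lemma42} the authors indeed take $x$ independent sets of size $x^{p-1}+1$ coming from the inductive hypothesis, but when an edge appears between two of them they do \emph{not} delete a vertex and move on. Instead, that edge — together with the edges witnessing that the corresponding main arcs are neighbors of vertices in $A$ — forms a semi-triangle. The main proof then takes a Type~1 semi-triangle $(A,B,C)$ of minimum length, applies Lemma~\ref{lemma42} again to one of the three regions $\mathcal{A}_A,\mathcal{A}_B,\mathcal{A}_C$ determined by it, and in every branch either finds the independent set, or finds a shorter Type~1 semi-triangle (contradicting minimality), or finds a second semi-triangle whose combination with $(A,B,C)$ yields a contradicting cycle — which is ruled out by Proposition~\ref{contradictingcycle} and Assumption~\ref{assumptions}. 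So the proof is a contradiction argument exploiting the Hamilton cycle and the ``no cycle of length $n-1$ through $W$'' hypothesis, not a constructive merge. Without that contradiction, there is no reason an independent set of size $x^p+1$ should exist at all; the merging route as stated cannot be completed.

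A secondary point: the paper's base cases are $p=1$ (which is your single-arc case) and $p=2$, handled separately in Remark~\ref{mainIndRemark} via an edge-density count through Lemma~\ref{lemma34}; the induction then starts at $p\ge 3$. Your single base case is consistent with $p=1$ but you will still need the $p=2$ case to make the machinery of Lemma~\ref{lemma41} work.
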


This means that given $\Theta\left(x^{p(p-1)/2}\right)$ arcs with each having at least $\Theta(x)$ vertices we can find an independent set of size $\Theta(x^p)$.

First we prove Theorem \ref{mainTheorem} using Lemma \ref{mainInduction}.

\begin{proof} [Proof of Theorem \ref{mainTheorem}]
Using Lemma \ref{mainInduction} for $p=5$ and $x=k^{1/5}$ and Proposition \ref{simpleArc} we arrive to an independent set of size $k+1$ and therefore a contradiction which proves Theorem \ref{mainEquivalent} and thus Theorem \ref{mainTheorem}.
\end{proof}

The following remark provides the base case of the inductive proof of Lemma \ref{mainInduction}.

\begin{remark} \label{mainIndRemark}
To prove Lemma \ref{mainInduction} we will use induction. As the base case we observe that $g(x+1,1)=x+1$ (by Remark \ref{remIndep}). Also $g(2x,2x)\ge x^2+1$, since an arc-system of size $2x$ can have at most $\frac{2x(2x-1)}{2}$ arc-edges, therefore by Lemma \ref{lemma34} it has an independent set of size $4x^2-\frac{2x(2x+1)}{2}\ge x^2+1$.
\end{remark}

To prepare for the induction step, first we prove Lemmas \ref{lemma41} and \ref{lemma42}.

\begin{definition}
Given an arc-system $\mathcal{A}$ of the graph $G$ and a subset of an arc $X\subset A\in\mathcal{A}$, we say that the \emph{arc-neighborhood} of $X$ is $$N_{\mathcal{A}}(X)=\setbuilder{B}{(\exists b\in B) (\exists x\in X) \{b,x\}\in E(G)}.$$ We sometimes write $N_\mathcal{A}(v)$ meaning $N_\mathcal{A}(\{v\})$ for simplicity. We denote by $d_\mathcal{A}(X)$ the size of the arc-neighborhood, that is $d_\mathcal{A}(X)=|N_\mathcal{A}(X)|$.
\end{definition}

\begin{definition}
Using the constants from Remark \ref{apbp} and given $p>1$ integer, we say an arc $A$ is \emph{good} in the arc-system $\mathcal{A}$, if to at least half of the vertices $v\in A$ we can assign a set of $$4b_{p-1}x^{(p-1)(p-2)/2}$$ arcs in $N_\mathcal{A}(v)$, each arc of $\mathcal{A}$ being assigned to at most one $v\in\mathcal{A}$. If an arc is not good we call it \emph{bad}.
\end{definition}

Also we say that a subset $X$ of an arc $A\in\mathcal{A}$ is \emph{expanding} in $\mathcal{A}$, if $(\forall Y\subset X) d_\mathcal{A}(Y)\ge |Y|4b_{p-1}x^{(p-1)(p-2)/2}$.

The definition of good and expanding depends on $p$, but it will always be clear from the context which $p$ is meant.

We will use the following simple proposition in the proof Lemma \ref{lemma41}.

\begin{proposition} \label{submod}
	Given an arc-system $\mathcal{A}$ of a graph $G$, the function $d_\mathcal{A}$ is submodular.
\end{proposition}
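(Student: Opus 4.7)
The plan is to verify the submodular inequality
\[
d_\mathcal{A}(X \cup Y) + d_\mathcal{A}(X \cap Y) \le d_\mathcal{A}(X) + d_\mathcal{A}(Y)
\]
directly from the definition of $N_\mathcal{A}$, by observing that $d_\mathcal{A}$ is a standard coverage function on the ground set of arcs. Concretely, $N_\mathcal{A}(X) = \bigcup_{x \in X} N_\mathcal{A}(x)$, so the function $d_\mathcal{A}(X) = \bigl|\bigcup_{x \in X} N_\mathcal{A}(x)\bigr|$ is exactly the cardinality of a union of sets indexed by $X$, and such functions are classically submodular.

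The two set-theoretic facts I would use are (i) $N_\mathcal{A}(X \cup Y) = N_\mathcal{A}(X) \cup N_\mathcal{A}(Y)$, which is immediate from unpacking the existential quantifier over $x \in X \cup Y$, and (ii) $N_\mathcal{A}(X \cap Y) \subseteq N_\mathcal{A}(X) \cap N_\mathcal{A}(Y)$. For (ii), if $B \in N_\mathcal{A}(X \cap Y)$ then there exist $b \in B$ and $x \in X \cap Y$ with $\{b,x\} \in E(G)$; since $x \in X$ we get $B \in N_\mathcal{A}(X)$, and since $x \in Y$ we get $B \in N_\mathcal{A}(Y)$. Note that (ii) is only an inclusion, not an equality, since a vertex in $X$ and a distinct vertex in $Y$ could each be adjacent to $B$ without $X \cap Y$ witnessing this.

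Combining these with the standard identity $|P \cup Q| + |P \cap Q| = |P| + |Q|$ applied to $P = N_\mathcal{A}(X)$ and $Q = N_\mathcal{A}(Y)$, I get
\[
d_\mathcal{A}(X \cup Y) + d_\mathcal{A}(X \cap Y) \le |N_\mathcal{A}(X) \cup N_\mathcal{A}(Y)| + |N_\mathcal{A}(X) \cap N_\mathcal{A}(Y)| = d_\mathcal{A}(X) + d_\mathcal{A}(Y),
\]
which is submodularity. There is no real obstacle here — the proposition is essentially the observation that coverage functions on any ground set are submodular, specialised to the ground set of arcs of $\mathcal{A}$.
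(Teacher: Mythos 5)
Your proof is correct and follows essentially the same route as the paper: both use the identity $|P\cup Q|+|P\cap Q|=|P|+|Q|$ together with $N_\mathcal{A}(X\cup Y)=N_\mathcal{A}(X)\cup N_\mathcal{A}(Y)$ and $N_\mathcal{A}(X\cap Y)\subseteq N_\mathcal{A}(X)\cap N_\mathcal{A}(Y)$. Your framing of $d_\mathcal{A}$ as a coverage function is a pleasant way to name the phenomenon, but the underlying argument is identical.
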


\begin{proof}
We trivially have 
$$|N_\mathcal{A}(A)|+|N_\mathcal{A}(B)|= |N_\mathcal{A}(A)\cup N_\mathcal{A}(B)|+|N_\mathcal{A}(A) \cap N_\mathcal{A}(B)|$$
and $|N_\mathcal{A}(A\cap B)|\le|N_\mathcal{A}(A) \cap N_\mathcal{A}(B)|$ as the former is a subset of the latter. Furthermore, $N_\mathcal{A}(A)\cup N_\mathcal{A}(B)=N_\mathcal{A}(A \cup B)$. Putting these together we get 
$$d_\mathcal{A}(A)+d_\mathcal{A}(B)\ge d_\mathcal{A}(A\cup B)+d_\mathcal{A}(A\cap B)\,.$$
\end{proof}

\begin{lemma} \label{lemma41}
Using the constants from Remark \ref{apbp}, for each integer $p>1$, given a simple arc-system $\mathcal{A}$ of size $b_px^{p(p-1)/2}$ and length $a_px$, there is either an independent set of size $x^p+1$ in $G[\mathcal{A}]$ or there is a non-empty $\mathcal{A}'\subseteq\mathcal{A}$ such that for all $A\in\mathcal{A}'$, $A$ is good in $\mathcal{A}'$.
\end{lemma}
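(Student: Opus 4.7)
The plan is to iteratively peel off bad arcs. Starting from $\mathcal{A}_0:=\mathcal{A}$, as long as the current $\mathcal{A}_i$ contains a bad arc, pick such an arc $A_i$ and set $\mathcal{A}_{i+1}:=\mathcal{A}_i\setminus\{A_i\}$. Since passing to a sub-arc-system only shrinks arc-neighborhoods, a bad arc stays bad in every sub-system containing it, so the process is well-defined and terminates. It ends either with a nonempty $\mathcal{A}_s$ in which every arc is good---set $\mathcal{A}':=\mathcal{A}_s$ and we are done---or with $\mathcal{A}_s=\emptyset$ after $s:=|\mathcal{A}|=b_p x^{p(p-1)/2}$ peelings, in which case we must exhibit an independent set of size $x^p+1$ in $G[\mathcal{A}]$.

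For the latter case, set $t:=4b_{p-1}x^{(p-1)(p-2)/2}$. For each peeled arc $A_i$, consider the deficiency function $f_i(X):=t|X|-d_{\mathcal{A}_{i-1}}(X)$ on subsets of $A_i$. By Proposition \ref{submod} this function is supermodular, so its maximizers form a lattice and admit a unique maximum-size element $Y_i\subseteq A_i$. Badness of $A_i$ in $\mathcal{A}_{i-1}$, combined with König--Egerváry defect reasoning for the $t$-matching problem (modeled by replacing each $v\in A_i$ with $t$ copies), yields $|Y_i|>|A_i|/2=a_p x/2$ and, by maximality of $Y_i$, the sharper bound $d_{\mathcal{A}_{i-1}}(Y_i)<t(|Y_i|-|A_i|/2)$; moreover $A_i\setminus Y_i$ is expanding in $\mathcal{A}_{i-1}$. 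Each $Y_i$ is independent in $G$ by Remark \ref{remIndep}, and the family $\{Y_i\}$ inherits simplicity from $\mathcal{A}$.

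To extract the independent set, note that any $G$-edge between $Y_i$ and $Y_j$ for $i<j$ forces $A_j\in N_{\mathcal{A}_{i-1}}(Y_i)$ (since $A_j$ is still in $\mathcal{A}_{i-1}$), so the arc-graph on $\{Y_i\}$ has at most $\sum_i d_{\mathcal{A}_{i-1}}(Y_i)<t\sum_i(|Y_i|-|A_i|/2)$ edges. A Turán-type argument then extracts a large independent set of arcs $I$, and $\bigcup_{i\in I}Y_i$ is independent in $G$. The constants from Remark \ref{apbp} (in particular the growth $b_p/b_{p-1}=1000\cdot 4^{2p-1}$) are calibrated so that this count reaches at least $x^p+1$.

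The main obstacle will be this final extraction: bounding $|Y_i|-|A_i|/2\le a_p x/2$ uniformly and running Turán yields only $\Theta(x^{p-1})$ vertices in the independent set, short by a factor of $x$. Closing this gap is where the bulk of the combinatorial work lies, and I expect it to require a careful case split on witness sizes together with an application of the inductive hypothesis Lemma \ref{mainInduction} at level $p-1$ to a cleverly chosen sub-family of $\{Y_i\}$, using both the sharper defect inequality above and the fact that each $Y_i$ has length at least $a_{p-1}x$.
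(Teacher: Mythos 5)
Your peeling process, the choice of a sub-arc $Y_i$ of each bad arc with $|Y_i|>|A_i|/2$, and the observation that an arc-edge from $Y_i$ to $Y_j$ ($i<j$) is counted in $d_{\mathcal{A}_{i-1}}(Y_i)$ are all essentially the paper's argument (the paper takes $B=A\setminus X$ for $X$ a maximal expanding set rather than your max-size defect maximizer $Y_i$, but these play the same role and enjoy the same bounds $|B|>|A|/2$ and $d_{\mathcal{A}_{t-1}}(B)=O(a_p x\cdot b_{p-1}x^{(p-1)(p-2)/2})$). The genuine gap is exactly the one you flag at the end, and the fix is not the machinery you suggest. You try to extract a set of arcs that is \emph{fully independent} in the arc-graph (Tur\'an), so each arc-edge costs you an entire arc and you land at $\Theta(x^{p-1})$. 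But the arc-system $\{Y_i\}$ is \emph{simple}: between any two arcs the bipartite graph is $M_2$-free, so a single vertex deletion kills all $G$-edges corresponding to one arc-edge. That is precisely Lemma \ref{lemma34}: a simple arc-system of length $a$, size $b$, with $m$ arc-edges contains an independent set of size $ab-m$ in $G$.

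Concretely: pick $x^{p-1}$ of the $Y_i$'s minimizing the number of arc-edges among them (an averaging step, not Tur\'an). The total arc-edge count on all $s=b_px^{p(p-1)/2}$ arcs is $O\bigl(s\cdot a_p x\cdot b_{p-1}x^{(p-1)(p-2)/2}\bigr)$, so by averaging the chosen $x^{p-1}$ arcs span at most $O\bigl(a_p b_{p-1} x^p/b_p\bigr)$ arc-edges. Lemma \ref{lemma34} then yields an independent set of size at least $\tfrac{a_p}{2}x\cdot x^{p-1}-O\bigl(a_p b_{p-1} x^p/b_p\bigr)\ge x^p+1$ once $b_p$ is large relative to $a_p b_{p-1}$, which the choices in Remark \ref{apbp} guarantee. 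No appeal to Lemma \ref{mainInduction} at level $p-1$ and no case split on witness sizes is needed here; the entire factor-of-$x$ gap is closed by replacing ``independent set of arcs'' with ``remove one vertex per arc-edge.''
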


\begin{proof}
We observe that an arc $A$ is good if and only if it has an expanding subset of size at least $|A|/2$, by Hall's theorem.

We define a process, by the end of which we either have the required independent set or the good subset. Let $\mathcal{A}_0=\mathcal{A}$ be the given arc system of size $b_px^{p(p-1)/2}$ and length $a_px$. Let $\mathcal{B}_0$ be the empty system. In step $t$ we will define arc-systems $\mathcal{A}_t$ and $\mathcal{B}_t$ as follows. 

If each $A\in\mathcal{A}_{t-1}$ is good in $\mathcal{A}_{t-1}$ then we define $\mathcal{A}'=\mathcal{A}_{t-1}$, $\mathcal{B}=\mathcal{B}_{t-1}$ and the process terminates.

Otherwise we take a bad arc $A$ from $\mathcal{A}_{t-1}$ and a maximal expanding set $X$ in it. We say that $Y\subset X$ is \emph{tight} if $d_{\mathcal{A}_{t-1}}(Y)< (|Y|+1)4b_{p-1}x^{(p-1)(p-2)/2}$. Let $B$ denote $A\setminus X$. Now for every $v\in B$ there is a tight set $T_v$ such that $d_{\mathcal{A}_{t-1}}(T_v \cup \{v\})< (|T_v|+1)4b_{p-1}x^{(p-1)(p-2)/2}$ (by the maximality of $X$). Let $T$ denote $\cup_{v\in B} T_v$. We claim that 
\begin{equation} \label{NB}
d_{\mathcal{A}_{t-1}}(B)\le 2a_px4b_{p-1}x^{(p-1)(p-2)/2}\,.
\end{equation} 
First we see that for each $v\in B$ we have $$|N_\mathcal{A}(v)\setminus N_{\mathcal{A}_{t-1}}(T_v)|\le 4b_{p-1}x^{(p-1)(p-2)/2}\,,$$ since $T_v$ was expanding and $T_v\cup\{v\}$ is not. This implies 
\begin{equation} \label{NBNT}
|N_{\mathcal{A}_{t-1}}(B)\setminus N_{\mathcal{A}_{t-1}}(T)|\le a_px4b_{p-1}x^{(p-1)(p-2)/2}\,.
\end{equation} 
Next we claim that if a set $T_i$ is the union of $i$ tight sets, then $$d_{\mathcal{A}_{t-1}}(T_i)\le (|T_i|+i)4b_{p-1}x^{(p-1)(p-2)/2}\,,$$ which we can prove by induction. We can assume $T_i=T_{i-1}\cup T'$ where $T_{i-1}$ is the union of $i-1$ tight sets and $T'$ is tight. Then $$d_{\mathcal{A}_{t-1}}(T_i)\le d_{\mathcal{A}_{t-1}}(T_{i-1})+d_{\mathcal{A}_{t-1}}(T')-d_{\mathcal{A}_{t-1}}(T_{i-1}\cap T')$$ $$\le ((|T_{i-1}|+i-1)+(|T'|+1)-|T_{i-1}\cap T'|)4b_{p-1}x^{(p-1)(p-2)/2}$$ where the first inequality is Proposition \ref{submod}. The second inequality follows by induction on $T_{i-1}$, tightness of $T'$ and expansion of $T'\cap T_{i-1}$.

Using that $|T|\le \frac{a_p}{2}x4b_{p-1}x^{(p-1)(p-2)/2}$ and that therefore $T$ can be written as the union of $\frac{a_p}{2}x4b_{p-1}x^{(p-1)(p-2)/2}$ tight sets, we have
\begin{equation} \label{NT}
d_\mathcal{A}(T)\le a_px4b_{p-1}x^{(p-1)(p-2)/2}\,.
\end{equation}
\eqref{NBNT} and \eqref{NT} together imply \eqref{NB}. So in this step we define $\mathcal{B}_t=\mathcal{B}_{t-1}\cup \{B\}$ and $\mathcal{A}_t=\mathcal{A}_{t-1}\setminus \{A\}$. Note that the total size of $\mathcal{A}_t$ and $\mathcal{B}_t$ is $b_px^{p(p-1)/2}$.

If by the end of this process we  have a non-empty good arc-system $\mathcal{A}'$ then we have found what we are looking for.

If $\mathcal{A}'$ is empty then we have an arc-system $\mathcal{B}$ with length $\frac{a_p}{2}x$ and size $b_px^{p(p-1)/2}$. We count the arc-edges of $G_\mathcal{B}$ in the following way. We assign each arc-edge to the arc that was added to $\mathcal{B}$ in the earlier step. By the Equation \eqref{NB} property of $B$ proven in the process, each arc will be assigned at most $2a_px4b_{p-1}x^{(p-1)(p-2)/2}$ arc-edges this way. Thus $G_\mathcal{B}$ has at most $$b_px^{p(p-1)/2}2a_px4b_{p-1}x^{(p-1)(p-2)/2}$$ arc-edges. This means $G_\mathcal{B}$ has an edge density of at most $$\frac{b_px^{p(p-1)/2}2a_px4b_{p-1}x^{(p-1)(p-2)/2}}{\binom{b_px^{p(p-1)/2}}{2}}\,.$$

We take a subset $\mathcal{C}$ of $\mathcal{B}$ of size $x^{p-1}$ with minimal amount of arc-edges. $G_\mathcal{C}$ will have at most the same edge density as $G_\mathcal{C}$. Therefore $G_\mathcal{C}$ has at most $$\binom{x^{p-1}}{2}\frac{b_px^{p(p-1)/2}2a_px4b_{p-1}x^{(p-1)(p-2)/2}}{\binom{b_px^{p(p-1)/2}}{2}}\le \frac{dx^p}{b_p}$$ edges, where $d$ is a constant not depending on $x$ or $b_p$. Therefore by Lemma \ref{lemma34} $G[\mathcal{C}]$ has an independent set of size $\frac{a_p}{2}x^p-\frac{dx^p}{b_p}\ge x^p+1$ as $a_p\ge 5$ and $b_p\ge d$.
\end{proof}

\begin{definition}
We fix a direction on the Hamilton cycle $H$, so we can order the vertices of an arc $u<v$ if $u$ is before $v$ in the given direction. We say that 3 arcs $(A,B,C)$ form a \emph{semi-triangle} if they are in the given order and there exists $a_1<a_2 \in A, b_1<b_2 \in B, c_1<c_2 \in C$ such that one of the following condition holds (see Figure \ref{fig:semi}).
\begin{itemize}
    \item Type 1: $\{a_1,c_1\},\{a_2,b_1\},\{b_2,c_2\}\in E(G)$ and $A$ and $B$ are not consecutive arcs.
    \item Type 2: $\{a_1,b_1\},\{a_2,c_1\},\{b_2,c_2\}\in E(G)$.
\end{itemize}
\end{definition}

\begin{figure}
  \begin{subfigure}{.4\textwidth}
  	\begin{tikzpicture}[scale=1]
        \cyclee
        \barccc(220,230,230,235, , )
        \barccc(250,260,280,290,$a_2$,$a_1$)
        \barccc(40,50,70,80,$c_2$,$c_1$)
        \barccc(150,160,180,190,$b_2$,$b_1$)
        \connn(260,180)
        \connn(50,160)
        \connn(70,280)
    \end{tikzpicture}
    \centering
    \caption{Type 1}
    \label{fig:semi1}
  \end{subfigure}
  \quad
  \begin{subfigure}{.4\textwidth}
  	\begin{tikzpicture}[scale=1]
        \cyclee
        \barccc(250,260,280,290,$a_2$,$a_1$)
        \barccc(40,50,70,80,$c_2$,$c_1$)
        \barccc(150,160,180,190,$b_2$,$b_1$)
        \connn(260,70)
        \connn(50,160)
        \connn(180,280)
    \end{tikzpicture}
    \centering
    \caption{Type 2}
    \label{fig:semi2}
  \end{subfigure}
  \centering
  \caption{Semi-triangles}
  \label{fig:semi}
\end{figure}
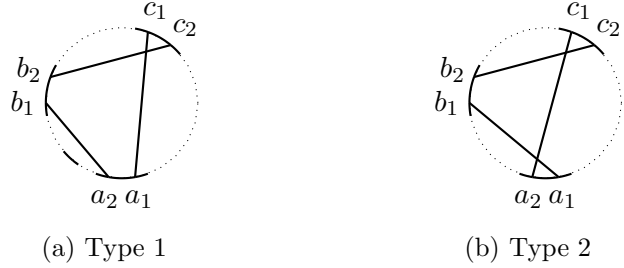

Note that a Type 2 semi-triangle gives us a contradicting cycle (see Figure \ref{fig:semicont1}), therefore it can not exist. Using the good arc-system given by Lemma \ref{lemma41} we will show the existence of certain Type 1 semi-triangles. Later, using those semi-triangles we find a contradicting cycle.

Given an arc $A$ we get the \emph{main part} of it by taking the second half of it in the given order. That is, $M(A)\subset A$, $|M(A)|=|A|/2$ and for all $v\in M(A), u\in A \setminus M(A)$ we have $v>u$. We define \emph{leftover part} as $L(A)=A\setminus M(A)$. For an arc-systems $M(\mathcal{A})$ and $L(\mathcal{A})$ are the set of the main and leftover parts respectively of each arc in the system.

\begin{lemma} \label{lemma42}
Using the constants from Remark \ref{apbp}, for each $p>1$ assuming the statement of Lemma \ref{mainInduction} for $p-1$ the following holds. Given a simple arc-system $\mathcal{A}$ of length $a_px$ and $A\in\mathcal{A}$ and to at least $|A|/6$ vertices of $A$ assigned at least $b_{p-1}x^{(p-1)(p-2)/2}+1$ neighboring main arcs, meaning that for each arc $v$ the assigned arcs are from $N_{M(\mathcal{A})}(v)$, such that each arc is assigned at most once. Then either there are arcs $B,C\in\mathcal{A}$ such that $(A,B,C)$ is a semi-triangle of Type 1, or there is an independent set in $G[\mathcal{A}]$ of size $x^p+1$.
\end{lemma}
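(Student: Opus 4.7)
The plan, assuming for contradiction that no Type~1 semi-triangle $(A,B,C)$ exists, is to construct an independent set in $G[\mathcal{A}]$ of size $x^p+1$. For each $v\in A^*$ (the set of at least $|A|/6$ vertices supplied by the hypothesis) and each assigned arc $X\in\mathcal{S}_v$, let $f_v(X)$ denote the earliest neighbor of $v$ in $M(X)$, which exists since $X\in N_{M(\mathcal{A})}(v)$, and set $X^+ = \{y\in X : y>f_v(X)\} \subseteq M(X)$. Each $X^+$ is independent, being a subset of the independent arc $X$. The key observation is that $U := \bigcup_{v\in A^*}\bigcup_{X\in\mathcal{S}_v} X^+$ is an independent set in $G[\mathcal{A}]$. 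To see this, take $X\in\mathcal{S}_v$, $Y\in\mathcal{S}_u$ with $u\neq v$ and, without loss of generality, cyclic order $A<X<Y$. Any $G$-edge $\{x,y\}$ with $x\in X^+$ and $y\in Y^+$ would---on setting $b_1=f_v(X)$, $b_2=x$, $c_1=f_u(Y)$, $c_2=y$, and ordering $\{a_1,a_2\}=\{u,v\}$ by position in $A$---yield a semi-triangle $(A,X,Y)$: Type~1 if $u<v$ (forbidden by the standing assumption) or Type~2 if $u>v$ (impossible, since Type~2 semi-triangles produce contradicting cycles). The at most two arcs cyclically consecutive to $A$ are excluded from each $\mathcal{S}_v$ to accommodate the `not consecutive' clause of the Type~1 definition; this reduces each $|\mathcal{S}_v|$ by at most a constant and is absorbed by the $+1$ slack in the hypothesis.

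To show $|U|\ge x^p+1$, I invoke Lemma~\ref{mainInduction} for $p-1$. Call a pair $(v,X)$ \emph{good} if $f_v(X)\le 2|X|/3$, equivalently if $v$ has a neighbor in the middle sixth $[|X|/2,2|X|/3]$ of $X$; for a good pair, $X^+$ contains the last third of $X$, which is a sub-arc of length $|X|/3 = a_{p-1}x$. Suppose at least $x$ vertices $v_1,\dots,v_x\in A^*$ each satisfy $|\mathcal{S}_{v_i}^{\mathrm{good}}|\ge b_{p-1}x^{(p-1)(p-2)/2}$. Then for each $v_i$ the last thirds of its good arcs form a simple arc-system of length $a_{p-1}x$ and size $\ge b_{p-1}x^{(p-1)(p-2)/2}$ (simplicity and independence are inherited from $\mathcal{A}$), and since $a_{p-1}x\le a_p x\le k$, Lemma~\ref{mainInduction} for $p-1$ applies and produces an independent set $I_i\subseteq U$ of size $x^{p-1}+1$. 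Then $\bigcup_i I_i\subseteq U$ is independent of size at least $x(x^{p-1}+1)\ge x^p+1$, as required.

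The main obstacle is the complementary regime, where fewer than $x$ vertices of $A^*$ meet the good-arc threshold, so at least $|A^*|-x$ vertices---still of order $a_p x$---have many \emph{bad} assigned arcs, meaning arcs $X$ for which $v$ has no neighbor in $[|X|/2,2|X|/3]$. This is strong missing-edge information: each bad pair $(v,X)$ witnesses non-adjacency between $v$ and an entire sub-arc of length $|X|/6$. The plan in this regime is to pool these middle sixths across many bad pairs into an auxiliary simple arc-system of sufficient size and, after combining or regrouping to meet the length requirement $a_{p-1}x$ (since $|X|/6$ alone is too short), apply Lemma~\ref{mainInduction} for $p-1$ to extract an independent set of size $x^{p-1}+1$; this set is then augmented by a pairwise independent subset of the associated $v$'s drawn from the independent arc $A$, with the $M_2$-freeness of $\mathcal{A}$ invoked to rule out cross edges. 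Reconciling the counting so that one of the two regimes always delivers an independent set of size $x^p+1$, with $a_p$ and $b_p$ absorbing the incidental constants, is the technical heart of the proof.
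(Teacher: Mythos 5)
Your independence observation for $U$ is correct and captures the same semi-triangle mechanism the paper uses, but your proof is incomplete and you acknowledge this yourself: the ``bad regime'' is left as a plan rather than a proof, and it is precisely there that the difficulty lies. The issue is your choice of $X^+$: its size depends on $f_v(X)$ and can be arbitrarily small when $v$'s earliest neighbour in $M(X)$ sits near the end of $X$, which is why you are forced into the good/bad split. You never establish that the two regimes together always yield $x^p+1$ independent vertices, and your sketch for the bad case (pooling middle sixths, regrouping to meet the length requirement, augmenting by vertices of $A$) would need substantial new ideas --- in particular, middle sixths of \emph{distinct} arcs cannot simply be ``combined'' into longer arcs, since the notion of an arc requires a contiguous segment of $H$ with no two consecutive vertices, so the ``regrouping'' step has no obvious meaning.

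The paper avoids your dichotomy entirely by applying Lemma~\ref{mainInduction} for $p-1$ not to the variable-length $X^+$'s but to the \emph{leftover} halves $L(X)$ of the assigned arcs. For each $v$, the arcs $\{L(X) : X\in\mathcal{S}_v\}$ form a simple arc-system of fixed length $a_px/2\ge a_{p-1}x$ and size $\ge b_{p-1}x^{(p-1)(p-2)/2}$ (after unassigning the single arc consecutively after $A$), so the inductive hypothesis gives an independent set $J_v$ of size $x^{p-1}+1$ unconditionally for every assigned vertex $v$. Taking $x$ of these sets, either their union is independent of size $x(x^{p-1}+1)>x^p$, or an edge $\{b,c\}$ between $J_v$ and $J_u$ --- with $b\in L(B)$, $c\in L(C)$, together with the edges from $v$ into $M(B)$ and from $u$ into $M(C)$ --- produces the semi-triangle. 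This is the key structural choice you are missing: by working with $L(X)$ rather than $X^+$ you get a system of uniform, known length, so there is no case analysis and no ``bad regime'' to handle. I would suggest replacing $X^+$ with $L(X)$ throughout and re-deriving the semi-triangle from the edge $\{b,c\}$ plus the two $v$-to-$M(\cdot)$ and $u$-to-$M(\cdot)$ edges; the counting then falls out immediately.

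One smaller point: for the ``not consecutive'' clause of Type~1 it suffices to exclude the single arc immediately after $A$ (the arc before $A$, if involved, would play the role of $C$, which the definition does not constrain), so the $+1$ slack in the hypothesis is exactly enough; your ``at most two arcs'' is a harmless overcount but worth tightening.
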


\begin{proof}
If any of the assigned neighboring arcs is consecutively after $A$ we unassign it. Now let $v\in A$ be a vertex that was assigned neighboring arcs. We look at the corresponding leftover arcs of these neighbors, that is an arc-system $\mathcal{A}_v$ of size $b_{p-1}x^{(p-1)(p-2)/2}$ and length $a_px/2$. By Lemma \ref{mainInduction} for $p-1$ and by $a_p\ge 2a_{p-1}$ there is an independent set  $J_v$ in $G[\mathcal{A}_v]$ of size $x^{p-1}+1$. Taking $x$ of these sets, which we can do because $a_px/6>x$, we either get an independent set of size more than $x^p+1$ or an edge $\{b,c\}$ between $J_v$ and $J_u$. Let $B$ and $C$ be the arc of $b$ and $c$ respectively. Without loss of generality we might assume that $A,B,C$ are in this order on the Hamilton cycle. Then $(A,B,C)$ is a semi-triangle, because of the edge proving that $M(B)$ is a neighbor of $v$, the edge proving that $M(C)$ is a neighbor of $u$ and the edge $\{b,c\}$ going between $L(B)$ and $L(C)$. Since Type 2 semi-triangles cannot exist, this must be a Type 1 semi-triangle.
\end{proof}

With this we are ready to prove the main lemma of the section.

\begin{proof} [Proof of Lemma \ref{mainInduction}]
We have already seen that the lemma holds for $p\le 2$ in Remark \ref{mainIndRemark}. For $p\ge 3$ we use induction.

Let $\mathcal{A}'$ be a simple arc system provided by Lemma \ref{simpleArc}. Using Lemma \ref{lemma41} we obtain a subsystem $\mathcal{A}$, such that $M(\mathcal{A})$ is good; or we find an independent set of size $x^p+1$ and we are done. Using Lemma \ref{lemma42} we get that there is a Type 1 semi-triangle; or we find the desired independent set. We define the length of a semi-triangle $(A,B,C)$ as the number of arcs between $A$ and $B$ (note that by definition this is at least 1). We take the lowest-length semi-triangle of Type 1 $(A,B,C)$ in $\mathcal{A}$. Let $D$ denote the arc consecutively after $A$ and $\mathcal{A}_A$ those arcs between $A,B$; $\mathcal{A}_B$ those arcs between $B,C$; and $\mathcal{A}_C$ those arcs between $C,A$. Now by applying the pigeonhole principle twice, for one of these sub arc-systems it is true that at least $|D|/6$ vertices of $D$ are assigned at least $b_{p-1}x^{(p-1)(p-2)/2}+1$ neighboring arcs from there. If this is $\mathcal{A}_A$, then by applying Lemma \ref{lemma42} on $A$ and $\mathcal{A}_A$ we find a shorter Type 1 semi-triangle contradicting the minimality of $(A,B,C)$; or the desired independent set. If this is $\mathcal{A}_B$ or $\mathcal{A}_C$ then we get a new Type 1 semi-triangle, which together with $(A,B,C)$ implies a contradicting cycle (see Figures \ref{fig:semicont2a} and \ref{fig:semicont2b}), which is impossible. So at one of these steps we must have found the independent set we are looking for.
\end{proof}

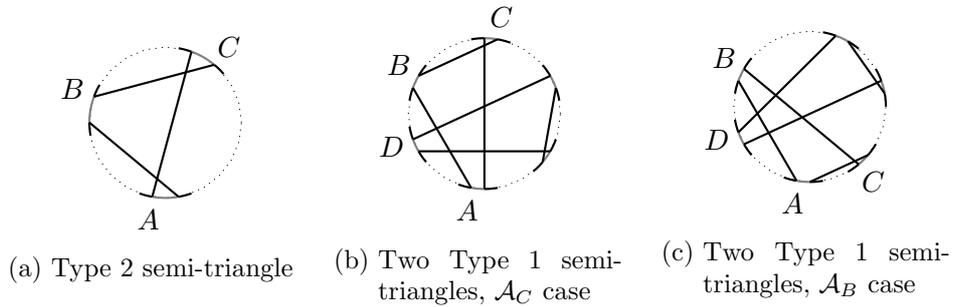
\begin{figure}
  \begin{subfigure}{.3\textwidth}
    \begin{tikzpicture}[scale=1]
        \cyclee
        \barcc(250,260,280,290,$A$,)
        \barcc(40,50,70,80,$C$,)
        \barcc(150,160,180,190,$B$,)
        \connn(260,70)
        \connn(50,160)
        \connn(180,280)
    \end{tikzpicture}
    \centering
    \caption{Type 2 semi-triangle}
    \label{fig:semicont1}
  \end{subfigure}
  \quad
  \begin{subfigure}{.3\textwidth}
    \begin{tikzpicture}[scale=1]
        \cyclee
        \barcc(250,260,270,280,$A$,)
        \barcc(70,80,90,100,$C$,)
        \barcc(140,150,160,170,$B$,)
        \connn(270,90)
        \connn(80,150)
        \connn(160,260)
        \barcc(190,200,210,220,$D$,)
        \barcc(310,320,330,340,,)
        \barcc(10,20,30,40,,)
        \connn(210,330)
        \connn(320,20)
        \connn(30,200)
	\end{tikzpicture}
    \centering
    \caption{Two Type 1 semi-triangles, $\mathcal{A}_C$ case}
    \label{fig:semicont2a}
  \end{subfigure}
  \quad
  \begin{subfigure}{.3\textwidth}
    \begin{tikzpicture}[scale=1]
        \cyclee
        \barcc(250,260,270,280,$A$,)
        \barcc(300,310,320,330,$C$,)
        \barcc(140,150,160,170,$B$,)
        \connn(270,320)
        \connn(310,150)
        \connn(160,260)
        \barcc(190,200,210,220,$D$,)
        \barcc(0,10,20,30,,)
        \barcc(50,60,70,80,,)
        \connn(210,20)
        \connn(10,60)
        \connn(70,200)
    \end{tikzpicture}
    \centering
    \caption{Two Type 1 semi-triangles, $\mathcal{A}_B$ case}
    \label{fig:semicont2b}
  \end{subfigure}
  \centering
  \caption{Contradicting cycles implied by semi-triangles}
\end{figure}

\section*{Acknowledgement}

I would like to thank my supervisors Peter Allen, Julia Böttcher and Jozef Skokan for recommending the problem, for discussions on the topic and for proofreading the paper.

\bibliographystyle{amsalpha}
\bibliography{biblio}

\end{document}